\documentclass[a4paper,10pt]{article}
\usepackage{authblk}
\usepackage[english]{babel}
\usepackage[utf8]{inputenc}
\usepackage[T1]{fontenc}
\usepackage{libertine}
\usepackage{hyperref}
\hypersetup{
    colorlinks=true,
    linkcolor=blue,
    filecolor=magenta,      
    urlcolor=cyan,
    pdftitle={Cot moments},
}
\usepackage{graphicx}

\usepackage{amsmath,amssymb,amsthm,amsbsy}
\usepackage{bbm} 

\usepackage{enumitem} 

\usepackage{geometry}
\geometry{left=2cm, right=2cm}

\usepackage{fancyvrb}

\theoremstyle{plain}
\newtheorem*{remark*}{Remark}

\newtheorem{theorem}{Theorem}[section]
\newtheorem{corollary}[theorem]{Corollary}
\newtheorem{lemma}[theorem]{Lemma}

%
\newcommand{\N}{\mathbb{N}} 
\newcommand{\R}{\mathbb{R}} 

\newcommand{\bpi}{ {\boldsymbol\pi}}


\newcommand{\calP}{\mathcal{P}}


%

\newcommand{\eqdef}{\overset{\mathrm{def}}{=}}


\newcommand{\Heve}[2]{{H_0(#1,#2)}}
\newcommand{\Hodd}[2]{{H_1(#1,#2)}}
\newcommand{\Reve}[1]{{R_0(#1)}}
\newcommand{\Rodd}[1]{{R_1(#1)}}
\newcommand{\Seve}[1]{{S_0(#1)}}
\newcommand{\Sodd}[1]{{S_1(#1)}}
\newcommand{\Ceve}[1]{{C_0(#1)}}
\newcommand{\Codd}[1]{{C_1(#1)}}

\providecommand{\keywords}[1]
{
  \small	
  \textbf{\textit{Keywords---}} #1
}

\begin{document}
\title{Moments of the $\cot$ function, central factorial numbers and their links with the
Dirichlet eta function at odd integers}
\author{Serge Iovleff\thanks{serge.iovleff@utbm.fr}}
\affil{University of Technology of Belfort-Montbéliard\\
P\^ole Énergie-Informatique,\\
13 rue Ernest Thierry-Mieg,\\
90010 Belfort cedex, France
}

\maketitle
\begin{abstract}
We investigate the properties of the moments of the $\cot$ function using
the central factorial numbers. Using a new integral representation of the central factorial numbers,
we find a new way to express these moments in terms of recursive sums and integrals.
This allows us to compute 'recursive' generalized harmonic series and multiple
integrals as a linear combination of the Dirichlet eta functions at odd integers.
\end{abstract}
\keywords{$\cot$ function, central factorial numbers, Dirichlet eta function}

\section{Introduction}\label{sec:introduction}

\paragraph{The problem} Let $C(m)$ denote the $m$th moment of the $\cot$ function, we define by the formula
\begin{equation}\label{eq:moment-int-cot}
C(m) \eqdef \frac{1}{m!}
\int_0^\pi \frac{\theta^{m}}{2} \cot\left(\frac{\theta}{2}\right) d\theta, \quad m=1,2,\ldots
\end{equation}
There are few results
about these moments. The reference text by Prudnikov et al.
(\cite{prudnikov1986integrals}, pp. 436) gives that 
$$
C(m)=\frac{\pi^n}{m!} \left[ \frac{1}{m} - 2 \sum_{k=1}^\infty \frac{\zeta(2k)}{4^k(n+2k} \right]
$$
which we not find very useful and informative. Alternatively, using the Riemann-Lebesgue lemma,
one has
$$
C(m) = \frac{2}{m!} \sum_{n=1}^\infty \int_0^{\pi} \theta^m \sin(n\theta) d\theta.
$$
The integrals involving $\theta^m\sin(n\theta)$ can be evaluated for any pair
$(m,n)\in\N^2$ (see \cite{gradshteyn2014}, 3.761-5). The resulting series
give a linear combination of the Dirichlet eta function and,
when $m$ is even, the zeta function. More specifically, we have
\begin{equation}\label{eq:Cm-Series}
C(m) =  \sum_{l=0}^{\left\lfloor \frac{m}{2}\right\rfloor}  (-1)^l \frac{\pi^{m-2l}}{(m-2l)!} \eta(2l+1)
+  \frac{1 + (-1)^m}{2}
{(-1)^{\left\lfloor \frac{m}{2}\right\rfloor}}\zeta(m+1).    
\end{equation}
The extra term appearing when $m$ is even seems to indicate that there exists
a different behavior of the moments of the function $\cot$ for the odd and even moments.
This is what will highlight
the results presented hereafter (theorems \ref{th:main-theorem}, \ref{th:main-theorem-integrals},\ref{th:main-theorem-series}).

Our approach in order to evaluate the moments of the $\cot$ function is very different from
the one used above:
we first observe that by using an elementary change of variable, it is possible to make
the generating function of the cfns appear inside the integral (\ref{eq:moment-int-cot}).
Integrating this generating function term by term and applying a novel integral representation
of the cfns (theorem \ref{th:Hkj-integral}), we arrive at a new series representation
of these moments.

By comparing these two series, we can then evaluate a family of integrals and series that
do not seem obvious to evaluate directly. Finally, we point out that (maybe) the most
interesting results that may be found in this paper are the proof of the intermediary results.
In fact, in Section \ref{sec-recursive-series-evaluation}, we evaluate exactly 'recursive'
generalized harmonic series using a method that seems quite new, and we did not find elsewhere.


\paragraph{Main results}
Before stating our main results and some of their consequences, we recall
an elementary identity that we will use throughout this paper.
For any $j>-1$ and $l=0,1,\ldots$, we have
\begin{equation}\label{eq:Integral-power-Log}
\frac{1}{(j+1)^{l+1}} = \frac{(-1)^l}{l!} \int_0^1 x^{j} \log^l(x) dx.
\end{equation}

Following the convention used throughout this paper, we denote by
$\Ceve{k}$ the
even moments of the $\cot$ function (when $m=2k$, $k>0$) and by $\Codd{k}$
the odd moments of the $\cot$ function (when $m=2k+1$, $k\geq0$). Our first result gives an integral representation for the moments of the $\cot$ function
in terms of two kernel functions
\begin{theorem}\label{th:main-theorem}
For any $k\geq 0$, we have
\begin{eqnarray}
\Codd{k} &=& \sum_{l=0}^k 2^{2l+1} \frac{\pi^{2k-2l}}{(2k-2l)!}
\int_{C_{l}}  K_1(x_1\ldots x_{l})
\prod_{i=1}^{l} \frac{\log(x_i)dx_i}{1-\prod_{m=1}^i x_m^2},\label{eq:C1kwithK1}\\
\Ceve{k+1} &=& \sum_{l=0}^k \frac{\pi^{2k-2l}}{(2k-2l+1)!}
\int_{C_{l}}  K_0(x_1 \ldots x_{l})
\prod_{i=1}^{l} \frac{\log(x_i)dx_i}{1-\prod_{m=1}^i x_m }\label{eq:C0kwithK0}
\end{eqnarray}
with $K_1$ and $K_0$ given by the entire series
\begin{equation*}
K_1(z) = \sum_{j=0}^\infty \binom{2j}{j} \frac{1}{(2j+1)^2}{ \left(\frac{z}2\right)^{2j}}\qquad\text{and}\qquad
K_0(z) = \frac{1}{2} \sum_{j=1}^\infty { \frac{\left(4z\right)^{j}}{j^3\binom{2j}{j}}},
\end{equation*}
$C_r$ denoting the unit square in $\R^r$ and using the convention that the integrals are equal
to one when $r=0$.
\end{theorem}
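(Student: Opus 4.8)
\emph{Sketch of the argument.} The plan follows the strategy announced in the introduction. The first step is the change of variables $t=\sin(\theta/2)$ in \eqref{eq:moment-int-cot}: since $\cot(\theta/2)\,d\theta=2\,dt/t$, $\theta=2\arcsin t$, and $t$ runs over $[0,1]$ as $\theta$ runs over $[0,\pi]$, this gives
$$C(m)=\frac{2^{m}}{m!}\int_0^1 (\arcsin t)^{m}\,\frac{dt}{t},$$
hence, after the additional substitution $t=\sqrt{x}$ in the even case,
$$\Codd{k}=\frac{2^{2k+1}}{(2k+1)!}\int_0^1 (\arcsin t)^{2k+1}\,\frac{dt}{t},\qquad
\Ceve{k+1}=\frac{2^{2k+1}}{(2k+2)!}\int_0^1 (\arcsin\sqrt{x})^{2k+2}\,\frac{dx}{x}.$$
One then invokes the classical expansion of a power of $\arcsin$ as the generating function of the central factorial numbers, writing $(\arcsin t)^{2k+1}$ (resp. $(\arcsin\sqrt{x})^{2k+2}$) as a power series in $t$ (resp. $x$) whose coefficients are the normalized numbers $\todd{n}{k}$ (resp. $\teve{n}{k+1}$) occurring in Theorem~\ref{th:Hkj-integral}. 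Integrating term by term, the factor $1/t$ (resp. $1/x$) produces the extra denominator $2n+1$ (resp. $n$), so that $C(m)$ becomes a single series in the central factorial numbers; this is the series representation of Theorem~\ref{th:main-theorem-series}, and it is the only place where the cotangent integral itself is used. No power of $\pi$ has appeared yet; the powers of $\pi$ (together with the boundary value $\arcsin 1=\pi/2$) will be produced in the next step.

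Next one substitutes into this series the integral representation of the central factorial numbers given by Theorem~\ref{th:Hkj-integral}. This unfolds each $\todd{n}{k}$ (resp. $\teve{n}{k+1}$) into a sum over a depth $l=0,\dots,k$ of terms made of a factor $\pi^{2k-2l}$, a combinatorial constant (carrying the powers of $2$ and the factorial $(2k-2l)!$, resp. $(2k-2l+1)!$), and an $l$-fold integral over the unit cube $C_l$ against a product of kernels $\dfrac{\log x_i\,dx_i}{1-\prod_{m=1}^i x_m^{2}}$ (resp. $\dfrac{\log x_i\,dx_i}{1-\prod_{m=1}^i x_m}$ in the even case), in which the denominator $(1-\prod_{m\le i}x_m^{2})^{-1}$ comes from a geometric series, the logarithm $\log x_i$ from \eqref{eq:Integral-power-Log}, and only the innermost factor still depends on the summation index. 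Interchanging the absolutely convergent sum over $n$ with the finitely many integrations, one sums the remaining series over the innermost index $j$: using \eqref{eq:Integral-power-Log} with $l=1$, which rewrites $1/(2j+1)^2=-\int_0^1 x^{2j}\log x\,dx$, the odd case yields $\sum_{j\ge 0}\binom{2j}{j}\frac{1}{(2j+1)^2}\bigl(\tfrac z2\bigr)^{2j}=K_1(z)$ with $z=x_1\cdots x_l$, and the even case yields $\frac12\sum_{j\ge1}\frac{(4z)^{j}}{j^3\binom{2j}{j}}=K_0(z)$. Collecting the sum over $l$ gives \eqref{eq:C1kwithK1} and \eqref{eq:C0kwithK0}; in the case $k=0$, where only $l=0$ survives (empty product $=1$, trivial integral), the formulas reduce to $\Codd{0}=2K_1(1)$ and $\Ceve{1}=K_0(1)$, which can be verified directly against the change of variables --- they are the standard integrals $\int_0^{\pi/2}\varphi\cot\varphi\,d\varphi=\tfrac\pi2\log 2$ and $\int_0^{\pi/2}\varphi^{2}\cot\varphi\,d\varphi$ --- a useful consistency check.

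The main obstacle is the bookkeeping in this second step: one has to match, term by term, the output of Theorem~\ref{th:Hkj-integral} with the nested kernels above, i.e.\ to verify that the iterated geometric expansion of $\prod_i(1-\prod_{m\le i}x_m^{2})^{-1}$ (resp.\ without squares) reproduces exactly the weakly decreasing multi-index sums into which the central factorial numbers unfold, while keeping the powers of $2$, the two kinds of factorials, and the $\pi$-powers consistent, and dealing with the structural asymmetry between the odd case ($\arcsin t$, even powers of the variables, squared kernel) and the even one ($\arcsin\sqrt{x}$, no squares). The analytic ingredients --- uniform convergence of the $\arcsin$ expansions on $[0,1]$, absolute convergence of the double series (because $K_0,K_1$ are entire and the kernels are integrable on $[0,1)^{l}$) to justify Fubini, and integrability at the boundary --- are routine but should be recorded. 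I expect the repackaging of the $\pi$-powers through the recursion underlying Theorem~\ref{th:Hkj-integral} to require the most care.
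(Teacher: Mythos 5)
Your proposal is correct and follows essentially the same route as the paper's own proof: the change of variable $t=\sin(\theta/2)$ (the paper uses $v=2\sin(\theta/2)$, which is the same up to rescaling), expansion of the resulting power of $\arcsin$ via the generating function (\ref{eq:gen-funct-cfn}) of the central factorial numbers, term-by-term integration, substitution of the integral representation of Theorem \ref{th:Hkj-integral}, and resummation of the innermost index into the kernels $K_1$ and $K_0$. The only slip is a label: the intermediate single series in the central factorial numbers is equation (\ref{eq:moment-series-cot}), not Theorem \ref{th:main-theorem-series}, which is a further (and different) consequence.
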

It is not difficult to verify that the kernels $K_1$ and $K_0$ have an explicit form
(see \cite{Lehmer1985}  for the second one) given by
$$
K_1(z) = \frac1z \int_0^z \frac{\arcsin(y)}{y} dy\qquad\text{and}\qquad
K_0(z) = 2 \int_0^z \frac{\arcsin(\sqrt{y})^2 }{y} dy.
$$
Based on this theorem, we can easily deduce the following theorem
\begin{theorem}\label{th:main-theorem-integrals}
For any $k\geq 0$, we have
\begin{eqnarray}
\Codd{k} &=& -\sum_{l=0}^k 2^{2l+1} \frac{\pi^{2k-2l}}{(2k-2l)!}
\int_{C_{l+1}}
\frac{\log(x_0) dx_0}{ \sqrt{1-\prod_{i=0}^{k-l} } x_{i}^{2} }
\prod_{i=1}^{k-l}\frac{ \log(x_i) dx_i}{1- \prod_{m=1}^i x_{m}^2}
     \label{eq:C1_integral} \\
\Ceve{k+1} &=&  \sum_{l=0}^k \frac{\pi^{2k-2l}}{(2k-2l+1)!}
\int_{C_{l+1}}
{\arcsin^2\left(\sqrt{\prod_{i=0}^{l}x_i }\right) }
\frac{dx_0}{x_0}
 \prod_{i=1}^{l} \left(\frac{ \log(x_i)} {1- \prod_{m=1}^i x_{m} }
 \frac{ dx_i} {x_i}\right)
 \label{eq:C0-integral} 
\end{eqnarray}
\end{theorem}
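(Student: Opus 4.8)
The plan is to observe that Theorem~\ref{th:main-theorem-integrals} is simply Theorem~\ref{th:main-theorem} rewritten after each of the kernels $K_1$ and $K_0$ has been replaced by a single integral over one extra variable $x_0\in[0,1]$: once this is done, Fubini merges the $l$-fold integral over $C_l$ with the $x_0$-integral into an integral over $C_{l+1}$, and nothing else is needed. Thus the whole proof reduces to establishing the two kernel identities
\begin{equation*}
K_1(z) = -\int_0^1 \frac{\log(x_0)\,dx_0}{\sqrt{1-z^2 x_0^2}}
\qquad\text{and}\qquad
K_0(z) = \int_0^1 \arcsin^2(\sqrt{z x_0})\,\frac{dx_0}{x_0} ,
\end{equation*}
valid for $z\in[0,1]$, after which one substitutes $z = x_1\cdots x_l$ into \eqref{eq:C1kwithK1} and \eqref{eq:C0kwithK0}.

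For $K_1$ I would start from its defining power series and insert the elementary identity \eqref{eq:Integral-power-Log} with $l=1$, that is $1/(2j+1)^2 = -\int_0^1 x_0^{2j}\log(x_0)\,dx_0$. The interchange of sum and integral is legitimate by monotone convergence, since $-\log(x_0)\ge 0$ on $(0,1)$ and every term $\binom{2j}{j}(z x_0/2)^{2j}$ is non-negative; summing the resulting series by means of $\sum_{j\ge0}\binom{2j}{j}w^j = (1-4w)^{-1/2}$ with $w=(z x_0/2)^2$ produces the stated formula for $K_1$. Equivalently, one can start from the explicit form $K_1(z) = \frac1z\int_0^z \frac{\arcsin(y)}{y}\,dy$ quoted above, substitute $y = z x_0$, and integrate by parts with antiderivative $\log(x_0)$; the boundary terms vanish because $\log 1 = 0$ and $\arcsin(z x_0)\log(x_0)\to 0$ as $x_0\to 0$.

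For $K_0$ I would write $1/j^3 = (1/j)(1/j^2)$ and use $1/j = \int_0^1 x_0^{j-1}\,dx_0$; after interchanging sum and integral (again monotone convergence, all terms being positive) the remaining series is $\sum_{j\ge1} (4 z x_0)^j/(j^2\binom{2j}{j})$, which is the classical expansion of $2\arcsin^2(\sqrt{z x_0})$, and the claimed formula for $K_0$ follows; it also follows directly from the explicit form of $K_0$ via the substitution $y = z x_0$. Substituting $z = x_1\cdots x_l$ into \eqref{eq:C1kwithK1} and \eqref{eq:C0kwithK0} and applying Fubini then gives \eqref{eq:C1_integral} and \eqref{eq:C0-integral}. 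The terms $l=0$ remain of the stated form by the convention on empty products, the inner integral being $\int_0^1 (-\log(x_0))\,dx_0$ in the first case and $\int_0^1 \arcsin^2(\sqrt{x_0})\,dx_0/x_0$ in the second.

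The power-series algebra above is routine; the point that really deserves care is the interchange of summation and integration near the boundary of the unit cube, where both $1 - \prod_{m\le i} x_m$ appearing in the measure and $1 - z^2 x_0^2$ appearing in the kernel degenerate. There the $\log(x_i)$ factors vanish to first order and the singularity $(1-z^2 x_0^2)^{-1/2}$ is integrable, while all three series involved have radius of convergence exactly $1$ with terms of size $O(j^{-5/2})$, so they converge uniformly up to the boundary; Tonelli's theorem therefore applies throughout and no conditionally convergent cancellation is invoked. This is the main --- and essentially the only --- obstacle; once it is dealt with, both formulas follow by direct substitution.
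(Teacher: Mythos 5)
Your proposal is correct and follows essentially the same route as the paper's own proof: both replace $1/(2j+1)^2$ by $-\int_0^1 x_0^{2j}\log(x_0)\,dx_0$ (identity \eqref{eq:Integral-power-Log}) and $1/j^3$ by $(1/j^2)\int_0^1 x_0^{j-1}\,dx_0$, then resum using $\sum_j\binom{2j}{j}(x/2)^{2j}=(1-x^2)^{-1/2}$ and $\tfrac12\sum_j(2x)^{2j}/(j^2\binom{2j}{j})=\arcsin^2(x)$, the only difference being that you spell out the Tonelli justification the paper leaves implicit. One small inaccuracy: the series for $(1-z^2x_0^2)^{-1/2}$ has terms of size $O(j^{-1/2})$ at $zx_0=1$ and does not converge uniformly up to the boundary (the limit is unbounded there), but this is harmless since your appeal to Tonelli for non-negative terms already suffices.
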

\begin{proof}
We recall that for any $|x|\leq1$ we have
(see \cite{stanley2001enumerative} example 1.1.15 for the first identity and \cite{Lehmer1985} for the second)
$$
\sum_{j=0}^\infty \binom{2j}{j} \left( \frac{x}{2}\right)^{2j}=\frac{1}{\sqrt{1-x^2}}
\qquad\text{and}\qquad
\frac{1}{2} \sum_{j=1}^\infty \frac{(2x)^{2j}}{j^2\binom{2j}{j}} = \arcsin^2{\left({x} \right)}.
$$
Using the identity (\ref{eq:Integral-power-Log}) with $j=2$ and $l=1$, we get $1/(2j+1)^2 = - \int_0^1 x_0^{2j}\log(x_0)dx_0$ which gives us the first equality. For the second equality, write
$1/j=\int_0^1 x_0^{j-1}dx_0$ and conclude.
\end{proof}

Finally, we can also express theorem \ref{th:main-theorem} as a sum of weighted recursive series
\begin{theorem}\label{th:main-theorem-series}
For any $k\geq 0$, we have
\begin{equation*}
\label{eq:C0-series} 
\Codd{k} = \sum_{l=0}^k (-1)^l \frac{2^{2l+1} \pi^{2k-2l}}{(2k-2l)!} \Sodd{l}
\quad\mbox{ and }\quad
\Ceve{k+1} = \sum_{l=0}^k  (-1)^{l}\frac{\pi^{2l}}{(2k-2l+1)!}  \Seve{l} 
\end{equation*}
with
\begin{eqnarray*}
\Sodd{k} &=& \sum_{j_0=0}^\infty \binom{2j_0}{j_0}
\left(\frac{1}{2}\right)^{2j_0}\frac{1}{(2j_0+1)^2}
\sum_{j_1=j_0}^\infty \sum_{j_2=j_1}^\infty \ldots \sum_{j_k=j_{k-1} }^\infty 
\frac{1}{(2j_1+1)^2\ldots (2j_k+1)^2}\\
\Seve{k} &=&\frac{1}{2} \sum_{j_0=1}^\infty
\frac{2^{2j_0}}{\binom{2j_0}{j_0}}\frac{1}{j_0^3}
\sum_{j_1=j_0}^\infty \sum_{j_2=j_1}^\infty \ldots \sum_{j_k=j_{k-1} }^\infty 
\frac{1}{j_1^2j_2^2 \ldots j_k^2}
\end{eqnarray*}
\end{theorem}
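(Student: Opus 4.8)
The plan is to obtain both identities directly from Theorem~\ref{th:main-theorem} by evaluating, for each fixed $l$, the $l$-fold integral occurring in its $l$-th summand through a term-by-term integration. Take the odd case first. Into the integrand of (\ref{eq:C1kwithK1}) I will insert the entire series defining $K_1$ together with the geometric expansions $\bigl(1-\prod_{m=1}^i x_m^2\bigr)^{-1}=\sum_{n_i\geq0}\prod_{m=1}^i x_m^{2n_i}$ for $i=1,\dots,l$. This turns the integrand into an $(l+1)$-fold series, with indices $j_0$ (from $K_1$) and $n_1,\dots,n_l$ (from the denominators), whose general term is a positive constant times $\prod_{m=1}^l x_m^{2j_0+2\sum_{i=m}^l n_i}\log x_m$. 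On the interior $(0,1)^l$ of $C_l$, every such term, once multiplied by $(-1)^l$ to absorb the $l$ negative logarithms, is nonnegative; since the integral is finite by Theorem~\ref{th:main-theorem}, Tonelli's theorem permits interchanging the $l+1$ summations with the $l$ integrals. Each single-variable integral is then evaluated by (\ref{eq:Integral-power-Log}) in the form $\int_0^1 x^a\log x\,dx=-(a+1)^{-2}$, the exponent of $x_m$ being $2j_0+2\sum_{i=m}^l n_i$.

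It then remains to recognise the resulting multiple sum. The substitution $j_m\eqdef j_0+\sum_{i=m}^l n_i$ $(m=1,\dots,l)$ is a bijection between $(n_1,\dots,n_l)\in\N^l$ and chains $j_1\geq j_2\geq\dots\geq j_l\geq j_0$, and it converts $2j_0+2\sum_{i=m}^l n_i+1$ into $2j_m+1$; since the summand is symmetric in $j_1,\dots,j_l$, reindexing them in increasing order produces exactly $\Sodd{l}$. Hence $\int_{C_l}K_1(x_1\cdots x_l)\prod_{i=1}^l\frac{\log x_i\,dx_i}{1-\prod_{m=1}^i x_m^2}=(-1)^l\Sodd{l}$, and substituting this into (\ref{eq:C1kwithK1}) gives the first formula. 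The even case proceeds in precisely the same way: one expands $K_0$ and $\bigl(1-\prod_{m=1}^i x_m\bigr)^{-1}=\sum_{n_i\geq0}\prod_{m=1}^i x_m^{n_i}$, applies Tonelli and (\ref{eq:Integral-power-Log}), and is led, after the analogous change of variables, to $\int_{C_l}K_0(x_1\cdots x_l)\prod_{i=1}^l\frac{\log x_i\,dx_i}{1-\prod_{m=1}^i x_m}=(-1)^l\Seve{l}$, which inserted into (\ref{eq:C0kwithK0}) gives the second formula.

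The step that I expect to demand the most care is the justification of the term-by-term integration: the decisive point is that, after factoring out the sign $(-1)^l$ coming from the $l$ logarithms, the whole multiple series has nonnegative terms on $(0,1)^l$, so Tonelli applies without further hypotheses once one knows the integral is finite, which is already part of Theorem~\ref{th:main-theorem}. (For the substitution of the kernel series to be legitimate everywhere on $C_l$ one may also note that $K_1$ and $K_0$ converge on the closed unit disc, since $\binom{2j}{j}\sim 4^j/\sqrt{\pi j}$ makes the coefficients of both kernels $O(j^{-5/2})$.) The rest is routine: the elementary integral $\int_0^1 x^a\log x\,dx$, and the bookkeeping of the change of variables $(n_i)_i\mapsto(j_m)_m$, where the one thing to watch is the precise lower limits of the nested summations so that they match the definitions of $\Sodd{l}$ and $\Seve{l}$.
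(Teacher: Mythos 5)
Your overall route is the paper's own: starting from Theorem~\ref{th:main-theorem}, expand the geometric denominators, integrate term by term via (\ref{eq:Integral-power-Log}), and recognize the nested sums. The paper peels off one variable at a time (expand $1/(1-\prod_{m=1}^l x_m^2)$, integrate over $x_l$, repeat), which produces the nested lower limits directly; you expand everything simultaneously and recover the same structure through the bijection $(n_1,\dots,n_l)\mapsto(j_1,\dots,j_l)$, $j_m=j_0+\sum_{i=m}^l n_i$. The two are equivalent, and your version is more careful on the one analytic point the paper ignores, namely the justification of the interchange of sums and integrals (Tonelli on a nonnegative integrand after extracting $(-1)^l$). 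Your odd case is correct and complete.

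The even case is where there is a genuine gap, and it sits exactly at the spot you flagged but did not actually check: the lower limits of the nested sums. Expanding $K_0(x_1\cdots x_l)=\frac12\sum_{j_0\ge1}\frac{4^{j_0}}{j_0^3\binom{2j_0}{j_0}}(x_1\cdots x_l)^{j_0}$ literally as it appears in (\ref{eq:C0kwithK0}) gives $x_m$ the exponent $j_0+\sum_{i=m}^l n_i$, so each integration yields $-1/\bigl(j_0+\sum_{i=m}^l n_i+1\bigr)^2$ and the reindexed chains satisfy $j_1\ge\cdots\ge j_l\ge j_0+1$. The resulting sum is \emph{not} $\Seve{l}$, whose inner summations start at $j_0$; in the odd case the shift by $1$ is harmlessly absorbed into $(2j_m+1)$, but here it is not. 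The source of the mismatch is that (\ref{eq:C0kwithK0}) should carry an extra factor $1/(x_1\cdots x_l)$, i.e.\ one must integrate $K_0(x_1\cdots x_l)/(x_1\cdots x_l)$: this is what the $dx_i/x_i$ in (\ref{eq:C0-integral}) and the exponent $x_i^{j-1}$ coming from $\Heve{k+1}{j}$ in Section~4 both indicate, and a check of $\Ceve{2}$ against (\ref{eq:Cm-Series}) confirms that the $j_1\ge j_0$ version is the one needed. So your even-case conclusion, asserted to follow ``in precisely the same way,'' is false for the formula you start from and only goes through with the corrected kernel. (Separately, substituting into (\ref{eq:C0kwithK0}) produces the coefficient $\pi^{2k-2l}/(2k-2l+1)!$ rather than the $\pi^{2l}/(2k-2l+1)!$ printed in the statement; the former is the correct one, so on that point your derivation targets the right identity.)
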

\begin{proof}
In equation \ref{eq:C1kwithK1}, using a development in series of $1/(1-x_1^2\ldots x_l^2)$, we get
\begin{multline*}
\int_{C_{l}}  K_1(x_1\ldots x_{l}) \prod_{i=1}^{l} \frac{\log(x_i)dx_i}{1-\prod_{m=1}^i x_m^2}= \\
\int_{C_{l}} \sum_{j_0=0}^\infty \binom{2j_0}{j_0} \left(\frac{1}{2} \right)^{2j_0} \frac{1}{(2j_0+1)^2} 
\left( \sum_{j_1=0}^\infty (x_1x_2\ldots x_{l})^{2j_0+2j_1}\right)\log(x_l) dx_l
\prod_{i=1}^{l-1} \frac{\log(x_i)dx_i}{1-\prod_{m=1}^i x_m^2} 
\end{multline*}
Integrating with respect to $x_l$ (identity  \ref{eq:Integral-power-Log}), we get
\begin{multline*}
\int_{C_{l}}  K_1(x_1\ldots x_{l}) \prod_{i=1}^{l} \frac{\log(x_i)dx_i}{1-\prod_{m=1}^i x_m^2}= \\
\int_{C_{l}} \sum_{j_0=0}^\infty \binom{2j_0}{j_0} \left(\frac{1}{2} \right)^{2j_0} \frac{1}{(2j_0+1)^2} 
\left( \sum_{j_1=j_0}^\infty \frac{-1}{(2j+1)^2}(x_1x_2\ldots x_{l-1})^{2j_1}
\prod_{i=1}^{l-1} \frac{\log(x_i)dx_i}{1-\prod_{m=1}^i x_m^2} \right).
\end{multline*}
Developing in series $1/(1-x_1^2\ldots x_{l-1}^2)$ and integrating with respect to $x_{l-1}$ and so on
gives the announced result.

The proof of the second equality is similar and has been omitted.
\end{proof}

\paragraph{Some consequences}
Setting $k=0$ and $k=1$ and comparing the formula (\ref{eq:Cm-Series})
with the representation given in Theorem \ref{th:main-theorem-integrals} and
\ref{th:main-theorem-series}, we obtain the following identities using the odd moments
$$
\int_0^1 \frac{-\log(x_0)}{\sqrt{1-x_0^2}}dx_0
=\sum_{j_0=0}^\infty \binom{2j_0}{j_0} \left(\frac{1}{2}\right)^{2j_0}\frac{1}{(2j_0+1)^2}
= \frac{\pi}{2}\log(2),
$$
$$\int_0^1\int_0^1 \frac{\log(x_0)\log(x_1)}{\sqrt{1-x_0^2x_1^2}(1-x_1^2)}dx_0dx_1
=\sum_{j_0=0}^\infty \binom{2j_0}{j_0} \left(\frac{1}{2}\right)^{2j_0}\frac{1}{(2j_0+1)^2}
\sum_{j1=j0}^\infty \frac{1}{(2j_1+1)^2}
=\frac{\pi^3}{24}\log(2)+\frac{\pi}{8}\eta(3).
$$
Using the even moments, we get
$$
\int_0^1 \frac{\arcsin^2(\sqrt{x_0})}{x_0}dx_0
= \frac{1}{2} \sum_{j_0=1}^\infty
\frac{2^{2j_0}}{\binom{2j_0}{j_0}}\frac{1}{j_0^3}
=\frac{\pi^2}{2}\log(2)-\frac{7}{3}\eta(3)
$$
$$
\int_0^1\int_0^1 \frac{\arcsin^2(\sqrt{x_0x_1})}{x_0x_1}\frac{\log(x_1)}{1-x_1}dx_0dx_1
= \frac{-1}{2} \sum_{j_0=1}^\infty
\frac{2^{2j_0}}{\binom{2j_0}{j_0}}\frac{1}{j_0^3} \sum_{j_1=j_0}^\infty \frac{1}{j_1^2}
=-\frac{\pi^4}{24}\log(2)-\frac{\pi^2}{9}\eta(3)+\frac{31}{15}\eta(5).
$$
It is possible to find similar (and more complicated) expressions with $k=2$, etc.,
but it does not give a general formula for these integrals/recursive sums.

\section{Evaluation of recursive series involving power of integers}
\label{sec-recursive-series-evaluation}
In this section, we are proving some intermediate results that are interesting in their own right. In part \ref{subsec:series-S0-S1}, we evaluate exactly the recursive generalized harmonic series
\begin{eqnarray*}
\Rodd{k} &\eqdef& \sum_{i_1=0}^\infty \frac{1}{(2i_1+1)^2}\sum_{i_2=0}^{i_1} \frac{1}{(2i_2+1)^2}\ldots
\sum_{i_k=0}^{i_{k-1}} \frac{1}{(2i_k+1)^2}
\\
\Reve{k} &\eqdef& \sum_{i_1=1}^\infty \frac{1}{i_1^2}\sum_{i_2=1}^{i_1} \frac{1}{i_2^2} \ldots
\sum_{i_k=1}^{i_{k-1}} \frac{1}{i_k^2}.   
\end{eqnarray*}
and in part \ref{subsec:series-A0-A1} the “dual” series
\begin{eqnarray}
A_1(k) &\eqdef& \sum_{i_1=0}^\infty \frac{1}{(2i_1+1)^2}
\sum_{i_2=i_1+1}^\infty\frac{1}{(2i_2+1)^2} \ldots  \sum_{i_k=i_{k-1}+1}^\infty \frac{1}{(2i_{k}+1)^2}
\label{def:A1}\\
A_0(k) &\eqdef& \sum_{i_1=1}^\infty \frac{1}{i_1^2}
\sum_{i_2=i_1+1}^\infty\frac{1}{i_2^2} \ldots  
\sum_{i_k=i_{k-1}+1}^\infty \frac{1}{i_{k}^2},\label{def:A0}
\end{eqnarray}
with the convention $A_1(0) = A_0(0)=1$.

\subsection{Evaluation of the series $\Rodd{k}$ and $\Reve{k}$}
\label{subsec:series-S0-S1}
In order to prove the next results, we use a result of (Vella, \cite{Vella2008})
which is an application  of the Faá di Bruno (1855) result. We believe that
this usage is new.

\begin{theorem}\label{th:S1-Evaluation}
For any $k>0$, we have
$$
\Rodd{k} = \left(\frac{\pi}{2}\right)^{2k} \frac{E_{2k}^\star}{(2k)!}
$$
with $E_{2k}^\star$ denoting the Euler (zig) numbers (\href{https://oeis.org/A000364}{OEIS A000364}, even powers only of $\sec(x) = 1/\cos(x)$).
\end{theorem}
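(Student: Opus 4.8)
The plan is to recognize $\Rodd{k}$ as a complete homogeneous symmetric function in the numbers $a_n \eqdef 1/(2n+1)^2$ and then to evaluate its generating function via the Euler product for the cosine; Vella's formula enters as the combinatorial device organizing the Taylor coefficients of the resulting reciprocal product.

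First I would reindex the nested sum. A weakly decreasing tuple $i_1\ge i_2\ge\cdots\ge i_k\ge 0$ is determined by its multiplicities $m_n\eqdef\#\{r : i_r=n\}$, which satisfy $\sum_{n\ge 0} m_n = k$, and the corresponding summand equals $\prod_{n\ge 0} a_n^{m_n}$. Hence
$$
\Rodd{k} = \sum_{\substack{m_0,m_1,\ldots\ge 0\\ \sum_n m_n = k}} \prod_{n\ge 0} a_n^{m_n} = h_k(a_0,a_1,a_2,\ldots),
$$
the $k$-th complete homogeneous symmetric function in the $a_n$. Since $\sum_n a_n = \pi^2/8 < \infty$ and all $a_n>0$, the rearrangements below are legitimate. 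Next I invoke the generating-function identity $\sum_{k\ge 0} h_k(a_0,a_1,\ldots)\,t^k = \prod_{n\ge 0}(1-a_n t)^{-1}$, valid and absolutely convergent for $|t|<1$; this is precisely the point where the Faá di Bruno / Vella machinery is used, to express the Taylor coefficients of a reciprocal (infinite) product as the partition sum on the left. Substituting $t = (2x/\pi)^2$ turns the generic factor into $\bigl(1-\tfrac{x^2}{\pi^2(n+1/2)^2}\bigr)^{-1}$, so by the Weierstrass product $\cos x = \prod_{n\ge 0}\bigl(1-\tfrac{x^2}{\pi^2(n+1/2)^2}\bigr)$ the right-hand side equals $1/\cos x = \sec x$ for $|x|<\pi/2$.

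Finally I compare with the defining series $\sec x = \sum_{k\ge 0} E_{2k}^\star \, x^{2k}/(2k)!$ of the Euler zig numbers. Matching the coefficient of $x^{2k}$ in the identity $\sum_{k\ge 0}\Rodd{k}\,(2x/\pi)^{2k} = \sec x$ yields $\Rodd{k}\,(2/\pi)^{2k} = E_{2k}^\star/(2k)!$, i.e. $\Rodd{k} = (\pi/2)^{2k} E_{2k}^\star/(2k)!$; the case $k=0$ reduces to the empty-product convention $\Rodd{0}=1=E_0^\star$, which also pins down the normalization.

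The main obstacle is rigor rather than ideas: justifying the passage from the $k$-fold nested sum to the symmetric-function and infinite-product forms, i.e. the absolute convergence and rearrangement of the multi-indexed series and of the infinite product, plus the interchange of the $t$-expansion with the product. All of this is controlled by $\sum_n 1/(2n+1)^2<\infty$ together with positivity, and on the analytic side by restricting to $|x|<\pi/2$ (equivalently $|t|<1$), which is exactly the common domain of convergence of $\sec x$ and of the product. A secondary task is to quote Vella's theorem in the precise form needed — the Taylor coefficients of $1/\cos$ as a sum over partitions — and to check that it matches the $h_k$ expansion above term by term.
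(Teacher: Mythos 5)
Your proof is correct, but it takes a genuinely different route from the paper's. Both arguments start by recognizing $\Rodd{k}$ as the complete homogeneous symmetric function $h_k$ in the variables $a_n=1/(2n+1)^2$, but they diverge immediately afterward. The paper expands $k!\,h_k$ via the cycle index formula as a sum over partitions of products of power sums, evaluates each power sum $\sum_n a_n^{\,l}=(2^{2l}-1)|B_{2l}|\pi^{2l}/(2(2l)!)$ through $\zeta(2l)$ and the Bernoulli numbers, and then resums the partition sum by applying Vella's Fa\'a di Bruno corollary to $f=\exp$ and $g(z)=\log(1/\cos(z/2))$, recognizing $\sec(z/2)$ as the resulting generating function. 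You instead go directly to the product identity $\sum_k h_k t^k=\prod_{n\ge0}(1-a_nt)^{-1}$ and, after the substitution $t=(2x/\pi)^2$, identify the product with $\sec x$ via the Weierstrass/Euler factorization $\cos x=\prod_{n\ge0}\bigl(1-\tfrac{x^2}{\pi^2(n+1/2)^2}\bigr)$; comparing coefficients finishes the proof. Your route is shorter and avoids Bernoulli numbers, $\zeta(2l)$, and Fa\'a di Bruno entirely, at the cost of invoking the infinite product for cosine; the paper's route is heavier but its power-sum machinery is what generalizes to the companion result for $\Reve{k}$. One small correction: the identity $\sum_k h_k t^k=\prod_n(1-a_nt)^{-1}$ is not ``precisely where the Fa\'a di Bruno / Vella machinery is used'' --- it follows from expanding each factor as a geometric series and multiplying out, with no composition formula needed, so your argument in fact requires no appeal to Vella at all. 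The convergence issues you flag are indeed all controlled by $\sum_n a_n<\infty$, positivity, and $|t|<1$ (note $\sup_n a_n=a_0=1$, so $|t|<1$ is exactly the right radius), so the rigor gap you worry about is easily closed.
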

\begin{proof}
Observe first that for any $l\geq 1$, we have
\begin{equation}\label{eq:sum-2i+1-puiss-2l}
\sum_{i=0}^\infty \frac{1}{(2i+1)^{2l}}
=\sum_{i=1}^\infty \frac{1}{i^{2l} }
-\frac{1}{2^{2l} } \sum_{i=1}^\infty \frac{1}{i^{2l} }
= \frac{2^{2l}-1}{2^{2l}} \zeta(2l)
= (2^{2l}-1) \frac{|B_{2l}|\pi^{2l}}{2(2l)!},
\end{equation}
with $B_{n}$ demoting the Bernoulli numbers. For $n\geq 1$, let $x_n=1/(2n-1)^2$ and fix $0<k\leq n$.
The complete homogeneous symmetric polynomials of degree $k$ in $x_1,\ldots,x_n$ is defined as
$$
h_k (x_1, x_2, \dots,x_n) = \sum_{1 \leq i_1 \leq i_2 \leq \cdots \leq i_k \leq n} x_{i_1} x_{i_2} \cdots x_{i_k}.
$$
Let $\calP(k)$ denote the set of all partitions of the integer $k$.
For $\bpi\in\calP(k)$ we write $\bpi=\left[1^{\pi_1},\ldots, k^{\pi_{k}}\right]$
with $\pi_l$ denoting the multiplicities of the part $l$ in the current partition of $k$.
With this convention, we have trivially, for any $\bpi\in\calP(k)$,
$k=\sum_{l=1}^{k} l \pi_l$ and we define the length of the partition
as $l(\bpi)=\sum_{l=1}^{k}\pi_l$. 
Using the cycle index formula (\cite{weisstein}), we have
$$
k! \, h_k(x_1,\ldots,x_n) = \sum_{\bpi\in \calP(k)} a(\bpi) 
                       \prod_{l=1}^{k}  \left(\sum_{j=1}^n x_j^{l} \right)^{\pi_l}
$$
with $a(\bpi)$ denoting the number of permutations whose cycle structure is the given partition. These numbers $a(\bpi)$ have the explicit expression
$$
a(\bpi)= k! \prod_{l=1}^{k} \frac{1}{ \pi_l! l^{\pi_l}},\quad \bpi \in \calP(k).
$$
Taking the limit in $n$,
and using (\ref{eq:sum-2i+1-puiss-2l}), we get
\begin{equation*}
\Rodd{k} = \frac{1}{k!} \sum_{\bpi\in \calP(k)} a(\bpi)\prod_{l=1}^{k}
\left[\frac{2^{2l}-1}{2^{2l}} \zeta(2l)\right]^{\pi_l}.
\end{equation*}
Simplifying, we obtain the following
\begin{equation*}
\Rodd{k} = {\pi^{2k}} \sum_{\bpi\in \calP(k)}  \prod_{l=1}^{k} \frac{1}{\pi_l!} 
\left[ (2^{2l}-1) \frac{|B_{2l}|}{2l(2l)!}\right]^{\pi_l}
= {\pi^{2k}} \sum_{\bpi\in \calP(k)} \binom{l(\bpi)}{\pi_1,\ldots,\pi_k} \frac{1}{l(\bpi)!}
\prod_{l=1}^{k} 
\left[ (2^{2l}-1) \frac{|B_{2l}|}{2l(2l)!}\right]^{\pi_l},
\end{equation*}
with $\binom{n}{\alpha_1,\ldots,\alpha_k}$ denoting the multinomial coefficient.
Using the generating function of the Bernoulli numbers it is easily found that
$$ \frac{z}{2} \cot\left( \frac{z}{2} \right)-z \cot\left(z\right)  = \sum_{n=1}^\infty (2^{2n}-1) |B_{2n}| \frac{z^{2n}}{(2n)!}.$$
Consider the entire series
$$
g(z) = \sum_{n=1}^\infty (2^{2n}-1) |B_{2n}| \frac{z^{2n}}{(2n)(2n)!}.
$$
Derivating with respect to $z$, we get
$$
zg'(z)=\frac{z}{2} \cot\left( \frac{z}{2} \right)-z \cot\left(z \right)
$$
and thus
$$
g(z) = \log\left(\frac{\sin(z/2)}{\sin(z)} \right)+\log(2)= \log\left(\frac{1}{\cos(z/2)} \right)
$$
is the generating function of the sequence $(2^{2n}-1)  \frac{|B_{2n}|}{(2n)(2n)!}$.
We apply Corollary 4 of (Vella, \cite{Vella2008}) with $g$ given above and $f(z)=\exp(z)$. Recognizing that $1/\cos(z/2)$ is the generating function of $E_{2n}^\star/2^n$, we arrive at the conclusion.
\end{proof}
We have a similar theorem for the series $\Reve{k}$.
\begin{theorem}\label{th:S0-Evaluation}
For any $k>0$, we have
$$
\Reve{k} = 2 (2^{2k-1} -1) \frac{|B_{2k}|\pi^{2k}}{(2k)!}.
$$
\end{theorem}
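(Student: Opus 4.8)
The plan is to run the proof of Theorem~\ref{th:S1-Evaluation} verbatim, only with the sequence $x_n = 1/(2n-1)^2$ replaced by $x_n = 1/n^2$. By definition $\Reve{k}$ is the limit as $n\to\infty$ of the complete homogeneous symmetric polynomial $h_k(1,1/2^2,\ldots,1/n^2)$, and the relevant power sums are now $\sum_{j=1}^\infty j^{-2l} = \zeta(2l) = (2\pi)^{2l}|B_{2l}|/(2\,(2l)!)$. Substituting this into the cycle index identity $k!\,h_k(x_1,\ldots,x_n) = \sum_{\bpi\in\calP(k)} a(\bpi)\prod_{l=1}^k(\sum_j x_j^l)^{\pi_l}$, letting $n\to\infty$, and pulling out the factor $\prod_l\big((2\pi)^{2l}\big)^{\pi_l} = (2\pi)^{2\sum_l l\pi_l} = (2\pi)^{2k}$ exactly as before, I get
$$
\Reve{k} = \frac{1}{k!}\sum_{\bpi\in\calP(k)} a(\bpi)\prod_{l=1}^k \zeta(2l)^{\pi_l}
= (2\pi)^{2k}\sum_{\bpi\in\calP(k)}\prod_{l=1}^k \frac{1}{\pi_l!}\left[\frac{|B_{2l}|}{2l\,(2l)!}\right]^{\pi_l}.
$$

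The next step is to identify the generating function of the sequence $|B_{2l}|/(2l\,(2l)!)$ and feed it to Vella's corollary. Set $g(z) = \sum_{l\geq 1}\frac{|B_{2l}|}{2l\,(2l)!}z^{2l}$. Differentiating term by term gives $zg'(z) = \sum_{l\geq1}\frac{|B_{2l}|}{(2l)!}z^{2l}$, and replacing $z$ by $z/2$ in the classical expansion $z\cot z = 1 - \sum_{l\geq1}\frac{2^{2l}|B_{2l}|}{(2l)!}z^{2l}$ shows this equals $1 - \tfrac{z}{2}\cot\tfrac{z}{2}$. Integrating (the constant being fixed by $g(0)=0$, since $z/(2\sin(z/2))\to1$ as $z\to0$) yields $g(z) = \log\frac{z}{2\sin(z/2)}$. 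Now applying Corollary~4 of \cite{Vella2008} with this $g$ and $f(z)=\exp(z)$ — precisely as in Theorem~\ref{th:S1-Evaluation}, where the analogous computation produced $1/\cos(z/2)$ — the partition sum above is recognized as $[z^{2k}]\exp(g(z)) = [z^{2k}]\frac{z/2}{\sin(z/2)}$, so that $\Reve{k} = (2\pi)^{2k}\,[z^{2k}]\frac{z/2}{\sin(z/2)}$. Reading off this coefficient from the cosecant expansion $\frac{u}{\sin u} = 1 + \sum_{l\geq1}\frac{2(2^{2l-1}-1)|B_{2l}|}{(2l)!}u^{2l}$ with $u = z/2$ gives $[z^{2k}]\frac{z/2}{\sin(z/2)} = 2^{-2k}\cdot\frac{2(2^{2k-1}-1)|B_{2k}|}{(2k)!}$, whence $\Reve{k} = \pi^{2k}\cdot\frac{2(2^{2k-1}-1)|B_{2k}|}{(2k)!}$, which is the claimed identity.

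The only genuine difficulty I anticipate is bookkeeping: keeping straight the several independent powers of $2$ that enter (the argument $z/2$ in $\log\frac{z}{2\sin(z/2)}$, the factor $2^{2l}$ relating $z\cot z$ to $\tfrac{z}{2}\cot\tfrac{z}{2}$, and the $(2\pi)^{2l}$ coming from $\zeta(2l)$), and checking that the hypotheses of Vella's corollary are met in exactly the same way as in Theorem~\ref{th:S1-Evaluation}. As a sanity check one can verify $\Reve{1}=\zeta(2)=\pi^2/6$ and $\Reve{2}=\tfrac12(\zeta(2)^2+\zeta(4))=7\pi^4/360$ against the formula. I would also note that the middle steps can be bypassed by the product formula $\prod_{j\geq1}(1-z^2/j^2) = \sin(\pi z)/(\pi z)$, which immediately gives $\sum_{k\geq0}\Reve{k}z^{2k} = \prod_{j\geq1}(1-z^2/j^2)^{-1} = \pi z/\sin(\pi z)$ and the same conclusion via the cosecant expansion; I would still present the Vella-based argument as the primary route to keep the treatment parallel to Theorem~\ref{th:S1-Evaluation}.
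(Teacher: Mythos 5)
Your proposal is correct and follows essentially the same route as the paper's (sketched) proof: the cycle index identity for $h_k$ with $x_n=1/n^2$, the power sums $\zeta(2l)$, the generating function $g(z)=\log\bigl(\tfrac{z}{2}\bigr)-\log\bigl(\sin\tfrac{z}{2}\bigr)$ fed into Vella's Corollary~4 with $f=\exp$, and the cosecant expansion of $\tfrac{z/2}{\sin(z/2)}$. You in fact supply the bookkeeping the paper omits, and your closing remark that the whole computation collapses to $\sum_k \Reve{k}z^{2k}=\pi z/\sin(\pi z)$ via the product formula for $\sin(\pi z)/(\pi z)$ is a clean independent confirmation.
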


\begin{proof}
The arguments are very similar to the previous one, so we give only a sketch of the proof.
Using the same methodology, we arrive at the expression
\begin{equation*}
\Reve{k} = \frac{1}{k!} \sum_{\bpi\in \calP(k)} a(\bpi)\prod_{l=1}^{k} \zeta(2l)^{\pi_l}.
\end{equation*}
Simplifying, we get the following
\begin{equation*}
\Reve{k} = {(2\pi)^{2k}} \sum_{\bpi\in \calP(k)} \prod_{l=1}^{k} \frac{1}{\pi_l!} \left(\frac{|B_{2l}|}{2l(2l)!} \right)^{\pi_l}
={(2\pi)^{2k}} \sum_{\bpi\in \calP(k)} \binom{l(\bpi)}{\pi_1,\ldots,\pi_k} \frac{1}{l(\bpi)!}
\prod_{l=1}^{k} 
\left[\frac{|B_{2l}|}{2l(2l)!}\right]^{\pi_l}.
\end{equation*}
Next, we find that
$$
\log(z/2) - \log(\sin(z/2)) = \sum_{i=1}^\infty \frac{|B_{2n}|}{2n} \frac{z^{2n}}{(2n)!}
$$
and that $g(z) = \frac{z}{2} \frac{1}{\sin(z/2})$
is the generating function of the sequence
$$
2(2^{2n-1}-1) \frac{|B_{2n}|}{(2n)!2^{2n}}, \qquad n\geq 1.
$$
See also \href{https://oeis.org/A036280}{OEIS A036280} and \href{https://oeis.org/A036281}{OEIS A036281}.
\end{proof}

\subsection{Evaluation of the series $A_1(k)$ and $A_0(k)$}
\label{subsec:series-A0-A1}

From the previous results, we obtain the corollary
\begin{corollary}
For any $k>0$, the following holds
$$
A_1(k) = \left(\frac{\pi}{2}\right)^{2k} \frac{1}{(2k)!}
\qquad \mbox{ and } \qquad
A_0(k) = \frac{\pi^{2k}}{(2k+1)!}.
$$
\end{corollary}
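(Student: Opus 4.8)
The plan is to relate the "dual" (strictly increasing) nested sums $A_1(k)$ and $A_0(k)$ to the "non-strictly increasing" nested sums $\Rodd{k}$ and $\Reve{k}$ already evaluated in Theorems \ref{th:S1-Evaluation} and \ref{th:S0-Evaluation}. The key observation is that both families are symmetric-function evaluations on the same variable sets: with $x_n = 1/(2n-1)^2$ for $n\ge 1$, one has $\Rodd{k} = h_k(x_1,x_2,\dots)$ (the complete homogeneous symmetric polynomial, i.e.\ sum over weakly increasing index tuples, taken in the limit $n\to\infty$), while $A_1(k) = e_k(x_1,x_2,\dots)$ (the elementary symmetric polynomial, i.e.\ sum over strictly increasing index tuples). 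Likewise $\Reve{k}=h_k(y_1,y_2,\dots)$ and $A_0(k)=e_k(y_1,y_2,\dots)$ with $y_n = 1/n^2$. So everything reduces to passing from the generating function of the $h_k$'s to that of the $e_k$'s.

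First I would record the two generating-function identities
\[
\sum_{k\ge 0} h_k(x_1,x_2,\dots)\, t^k \;=\; \prod_{n\ge 1} \frac{1}{1-t x_n},
\qquad
\sum_{k\ge 0} e_k(x_1,x_2,\dots)\, t^k \;=\; \prod_{n\ge 1} \bigl(1 + t x_n\bigr),
\]
valid as formal (and, here, convergent) power series in $t$. The crucial elementary relation between them is the substitution $t \mapsto -t$ together with inversion: $\prod(1+tx_n) = 1/\prod\bigl(1/(1+tx_n)\bigr)$ is not quite what is needed, so instead I would use the cleaner route via the classical product formulas. For $x_n = 1/(2n-1)^2$ we use Euler's product $\cos z = \prod_{n\ge 1}\bigl(1 - \tfrac{4z^2}{(2n-1)^2\pi^2}\bigr)$, which gives directly
\[
\sum_{k\ge 0} A_1(k)\, t^k \;=\; \prod_{n\ge 1}\Bigl(1 + \frac{t}{(2n-1)^2}\Bigr)
\;=\; \cos\!\bigl(i\pi\sqrt{t}/2\bigr) \;=\; \cosh\!\bigl(\pi\sqrt{t}/2\bigr)
\;=\; \sum_{k\ge 0} \frac{(\pi/2)^{2k}}{(2k)!}\, t^{k},
\]
and comparing coefficients of $t^k$ yields $A_1(k) = (\pi/2)^{2k}/(2k)!$. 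Similarly, for $y_n = 1/n^2$ we use $\sin z = z\prod_{n\ge1}\bigl(1 - \tfrac{z^2}{n^2\pi^2}\bigr)$, so
\[
\sum_{k\ge 0} A_0(k)\, t^k \;=\; \prod_{n\ge 1}\Bigl(1 + \frac{t}{n^2}\Bigr)
\;=\; \frac{\sin(i\pi\sqrt{t})}{i\pi\sqrt{t}} \;=\; \frac{\sinh(\pi\sqrt{t})}{\pi\sqrt{t}}
\;=\; \sum_{k\ge 0} \frac{\pi^{2k}}{(2k+1)!}\, t^{k},
\]
and extracting the coefficient of $t^k$ gives $A_0(k) = \pi^{2k}/(2k+1)!$, as claimed. (Consistency with Theorems \ref{th:S1-Evaluation} and \ref{th:S0-Evaluation} can be checked: the $h$-generating function is the reciprocal, $1/\cosh(\pi\sqrt{-t}/2) = \sec(\pi\sqrt{t}/2)$ and $\pi\sqrt{t}/\sin(\pi\sqrt{t})$ respectively, which indeed reproduce the Euler and Bernoulli number expressions.)

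The only genuine subtlety — the step I expect to be the main obstacle to make fully rigorous — is the interchange of limit and infinite product when passing from $e_k(x_1,\dots,x_n)$ for finite $n$ to $e_k(x_1,x_2,\dots)$, i.e.\ justifying that $A_1(k)$ and $A_0(k)$ really equal the coefficients of the above infinite products. This is handled by absolute convergence: since $\sum_n x_n < \infty$ and $\sum_n y_n < \infty$, the products $\prod(1+tx_n)$ and $\prod(1+ty_n)$ converge uniformly on compact $t$-sets, each coefficient $e_k$ is a finite-then-monotone limit of nonnegative terms, and dominated convergence (or simply monotone convergence, all terms being positive) lets us take $n\to\infty$ term by term. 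With that in hand the coefficient extraction from the closed forms $\cosh(\pi\sqrt t/2)$ and $\sinh(\pi\sqrt t)/(\pi\sqrt t)$ is immediate from their Taylor expansions, and the corollary follows.
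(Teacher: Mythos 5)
Your proof is correct, but it follows a genuinely different route from the paper. The paper's own argument unfolds the definition of $A_1(k)$ to obtain the convolution recurrence $A_1(k)=\sum_{l=1}^{k}(-1)^{l+1}\Rodd{l}\,A_1(k-l)$ — which is exactly the classical identity $\sum_{l=0}^{k}(-1)^{l}h_l e_{k-l}=0$ relating complete homogeneous and elementary symmetric functions — and then verifies the closed form by induction, at which point the induction step reduces to the known vanishing of $\sum_{l=0}^{k}(-1)^{l+1}\binom{2k}{2l}E_{2l}^\star$; it therefore leans on Theorem \ref{th:S1-Evaluation} and on an external Euler-number identity. You instead identify $A_1(k)$ and $A_0(k)$ directly as the elementary symmetric functions $e_k$ of $\{1/(2n-1)^2\}$ and $\{1/n^2\}$ and read their values off the Euler products for $\cos$ and $\sin$, i.e.\ from the Taylor coefficients of $\cosh(\pi\sqrt{t}/2)$ and $\sinh(\pi\sqrt{t})/(\pi\sqrt{t})$; your convergence justification (monotone convergence of nonnegative terms, $\sum_n x_n<\infty$) is adequate. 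Your route is shorter and self-contained modulo the product formulas, bypassing $\Rodd{k}$ and the Euler numbers entirely; the paper's route buys an explicit exhibition of the duality between $A_1,A_0$ and $\Rodd{k},\Reve{k}$ (their generating functions are reciprocal, as your parenthetical consistency check confirms) and stays within the partition/Fa\`a di Bruno machinery already set up in Section \ref{subsec:series-S0-S1}. Both arguments are valid.
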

\begin{proof}
We just need to prove the first claim
as the evaluation of $A_0(k)$ is already proved by \cite{Hoffman1992}
and \cite{Merca2015} using similar arguments to the one used below.

It is easy to find that when $k=1$, $A_1=\pi^2/8$ but the remaining constants
are more difficult to compute. Let thus consider $k\geq2$, with an obvious meaning for
the coefficient $a_i^{(k)}$, we have
\begin{eqnarray*}
A_1(k) &=& \sum_{i_1=0}^\infty \frac{a_{i_1}^{(k)}}{(2i_1+1)^2}
         = \sum_{i_1=0}^\infty \frac{1}{(2i_1+1)^2} \left( A_{k-1} - \sum_{i_2=0}^{i_1}  \frac{a_{i_2}^{(k-1)}}{(2i_2+1)^2} \right) \\
       &=& \Rodd{1} A_1(k-1) - \left[\sum_{i_1=0}^\infty \frac{1}{(2i_1+1)^2} 
           \sum_{i_2=1}^{i_1} \frac{a_{i_2}^{(k-1)}}{(2i_2+1)^2} \right] \\
       &=& \ldots \\
       &=& \sum_{l=1}^k (-1)^{l+1} \Rodd{l} A_1(k-l).
\end{eqnarray*}
with $\Rodd{l}$ defined in theorem \ref{th:S0-Evaluation}. The result is proven by recurrence.
Assuming the expression of $A_1(l)$ is true for $1\leq l\leq k-1$, it will be true
for $l=k$ if
$$\sum_{l=1}^k (-1)^{l+1} \binom{2k}{2l} E_{2l}^\star = 1 \qquad \Longleftrightarrow 
\qquad \sum_{l=0}^k (-1)^{l+1} \binom{2k}{2l} E_{2l}^\star = 0.
$$
This result can be found in \cite{simmons2007calculus} and on the web site
\href{https://proofwiki.org/wiki/Sum_of_Euler_Numbers_by_Binomial_Coefficients_Vanishes}{proofwiki}
(Sum of Euler Numbers by Binomial Coefficients Vanishes).

\end{proof}

\subsection{Integral representation of $A_1(k)$ and $A_0(k)$}
The next lemma gives an integral representation of the series $A_0(k)$ and $A_1(k)$
\begin{lemma}\label{lemma:Ak-evaluation}
For any $k\geq 0$, the following equality holds
$$
A_1(k) = (-1)^{k} \int_{C_k}\prod_{i=1}^{k} 
                  \frac{  x_i^{2(i-1)}  \log(x_i) dx_i}{ \left(1-\prod_{m=i}^k x_m^2\right) }
\quad \mbox{ and }\quad
A_0(k) = (-1)^{k} \int_{C_k} \prod_{i=1}^{k}
           \frac{ x_i^{i-1}  \log(x_i) dx_i }{ \left(1-\prod_{m=i}^k x_m\right) }
$$
with $A_0(k)$ and $A_1(k)$ defined in (\ref{def:A0}) and (\ref{def:A1}).
\end{lemma}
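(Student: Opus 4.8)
The plan is to start from the series definitions (\ref{def:A1}) and (\ref{def:A0}) and turn each inner summation range $i_j \geq i_{j-1}+1$ into an integration over one unit-square variable, working from the innermost sum outward. I will give the argument for $A_1(k)$; the case of $A_0(k)$ is identical after replacing $(2i+1)^2$ by $i^2$ and the appropriate bookkeeping on exponents. First I record the two elementary facts I need: by (\ref{eq:Integral-power-Log}) with $l=1$ we have $1/(2j+1)^2 = -\int_0^1 x^{2j}\log(x)\,dx$, and the geometric series $\sum_{i\geq 0} t^i = 1/(1-t)$ for $|t|<1$ lets us collapse a lower-bounded sum: for any fixed $p\geq 0$,
\begin{equation*}
\sum_{i = p+1}^\infty c_i = \sum_{i=p+1}^\infty c_i,
\end{equation*}
which becomes useful once we attach a variable $x^{2i}$ and sum a \emph{shifted} geometric progression $\sum_{i\geq p+1} x^{2i} = x^{2(p+1)}/(1-x^2)$.

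The key computation is the following inductive step. Suppose we have already rewritten the innermost $k-j$ sums of $A_1(k)$ as an integral over $x_{j+1},\dots,x_k\in[0,1]$, producing an integrand of the form (some expression)$\cdot x_{j+1}^{2 i_j}$ times the already-assembled product $\prod_{i=j+1}^k \frac{x_i^{2(i-1)}\log(x_i)\,dx_i}{1-\prod_{m=i}^k x_m^2}$, where the leftover factor $x_{j+1}^{2 i_j}$ carries the dependence on the next summation index $i_j$. Now I treat the sum over $i_j$ ranging from $i_{j-1}+1$ to $\infty$: I write $1/(2i_j+1)^2 = -\int_0^1 x_j^{2i_j}\log(x_j)\,dx_j$, interchange sum and integral (justified by absolute convergence since all terms are positive and the total sum $A_1(k)<\infty$), and sum the geometric series $\sum_{i_j \geq i_{j-1}+1} (x_j^2 x_{j+1}^2\cdots)^{i_j}$. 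The factor $x_{j+1}^{2(j-1)}\cdots$ from the product combines with the new variables so that the sum of the shifted geometric series $\sum_{i_j\geq i_{j-1}+1}(\,\cdot\,)^{i_j}$ equals $(\text{product of squares})^{\,i_{j-1}+1}/(1-\text{product of squares})$; the numerator's leading power is exactly what produces the factor $x_j^{2(j-1)}$ after reindexing (the ``$+1$'' in $i_{j-1}+1$ shifts the exponent), the denominator is $1-\prod_{m=j}^k x_m^2$, and the $\log(x_j)$ with its sign $-1$ accumulates into the $(-1)^k$. Carrying this out for $j=k,k-1,\dots,1$ and checking that the base case $j=k$ (the single sum $\sum_{i_k\geq i_{k-1}+1}1/(2i_k+1)^2$) already has the claimed shape, one reaches the stated multiple integral.

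The main obstacle is purely bookkeeping: tracking how the exponent $x_i^{2(i-1)}$ arises, since it is the cumulative effect of all the ``$+1$'' shifts in the nested lower limits $i_j \ge i_{j-1}+1$ propagating through successive geometric summations. The cleanest way to control this is to prove by induction on $k$ the slightly stronger statement that, for every $p\geq 0$,
\begin{equation*}
\sum_{i_1 = p+1}^\infty \frac{1}{(2i_1+1)^2}\sum_{i_2=i_1+1}^\infty \frac{1}{(2i_2+1)^2}\cdots\sum_{i_k=i_{k-1}+1}^\infty \frac{1}{(2i_k+1)^2}
= (-1)^k \int_{C_k} \frac{\Big(\prod_{m=1}^k x_m\Big)^{2p}\,\prod_{i=1}^k x_i^{2(i-1)}\log(x_i)\,dx_i}{\prod_{i=1}^k\Big(1-\prod_{m=i}^k x_m^2\Big)},
\end{equation*}
so that the factor $(\prod x_m)^{2p}$ records the current lower shift and feeds the next induction step exactly; setting $p=0$ gives the lemma. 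A secondary point to verify is convergence of the integrals over $C_k$: near the corner where all $x_i\to 1$ the denominators vanish, but each $\log(x_i)\sim -(1-x_i)$ supplies a compensating zero, and the bound follows from $0 < 1-\prod_{m=i}^k x_m^2 \le \sum_{m=i}^k(1-x_m^2)$ together with the finiteness of $A_1(k)$ and $A_0(k)$ already established in the corollary. The same scheme, mutatis mutandis (no factor of $2$ in the exponents, $i$ in place of $2i+1$, and using $1/i^2=-\int_0^1 x^{i-1}\log x\,dx$ after one $1/i = \int_0^1 x^{i-1}dx$), proves the formula for $A_0(k)$.
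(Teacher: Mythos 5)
Your strategy is essentially the paper's own computation run in the opposite direction: the paper starts from the multiple integral, expands the unique denominator containing $x_1$ as a geometric series, integrates out $x_1$ via (\ref{eq:Integral-power-Log}) to produce the outermost factor $1/(2i_1+1)^2$ (carrying the leftover exponent $2i_1$ into the remaining variables, which after reindexing yields the shifted lower limits), and then iterates over $x_2,\ldots,x_k$; you instead go from the series to the integral, converting the sums one at a time from the innermost outward. Both reduce to the same bookkeeping of the cumulative $+1$ shifts, and that is exactly where your write-up contains a concrete error.

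The strengthened induction hypothesis you display is off by one: as stated it is false, and it does not specialize to the lemma. Take $k=1$, $p=0$: your identity reads $\sum_{i_1\ge 1}1/(2i_1+1)^2=-\int_0^1\log(x)/(1-x^2)\,dx=\pi^2/8$, which is wrong by exactly the missing $i_1=0$ term; moreover the outer sum of $A_1(k)$ starts at $i_1=0$, so ``setting $p=0$'' in a statement whose outer sum starts at $i_1=p+1$ cannot recover it. The correct pairing is lower bound $i_1\ge p$ with marker $\left(\prod_{m=1}^k x_m\right)^{2p}$ (equivalently, lower bound $i_1\ge p+1$ with marker $\left(\prod_{m=1}^k x_m\right)^{2(p+1)}$), because the shifted geometric series $\sum_{i\ge q}t^{i}=t^{q}/(1-t)$ puts exponent $2q$, not $2q-2$, into the numerator. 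With that correction the induction closes: the factor $\left(\prod_{m=2}^k x_m\right)^{2(i_1+1)}$ produced by the inner $(k-1)$-fold block combines with $\prod_{i=2}^k x_i^{2(i-2)}$ to give $\prod_{i=2}^k x_i^{2(i-1)}$, the sum over $i_1\ge 0$ supplies the last denominator $1-\prod_{m=1}^k x_m^2$ with no extra power of $x_1$, and $p=0$ then yields the lemma. Your convergence remark and the transposition to $A_0(k)$ are fine.
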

\begin{proof}
We prove the result for $A_1(k)$. The proof for $A_0(k)$ is similar and is omitted.
We have to show that the integral on the right hand side is equal to
$$
\sum_{i_1=0}^\infty \frac{1}{(2i_1+1)^2}
\sum_{i_2=i_1+1}^\infty\frac{1}{(2i_1+1)^2} \ldots  \sum_{i_l=i_{l-1}+1}^\infty \frac{1}{(2i_{k}+1)^2}.
$$
Developing in series the (unique) term with $x_1^2$ we obtain
$$
\frac{1}{1- x_1^2x_2^2\ldots x_k^2} = \sum_{i_1=0}^\infty (x_1^2x_2^2\ldots x_k^2)^{i_1}
$$
and integrating with respect to $\log(x_1)\, dx_1$ we get that the right hand side is equal to
$$
\sum_{i_1=0}^{\infty} \frac{1}{(2i_1+1)^2} \int_{C_{k-1} }\frac{ (-1)^{k-1} \prod_{i=2}^{k} x_i^{2(i-1)+2i_1}  \log(x_i) dx_i}
{ \prod_{i=2}^{l} \left(1-\prod_{m=i}^k x_m^2\right) }.
$$
Iterating the process with $x_2$, $x_3$,...,$x_k$ we arrive at the conclusion.
\end{proof}

\section{Integral representation of the central factorial numbers (cfns)}
\label{sec:results-cfn}

In this part, we define the (unsigned) central factorial numbers and give
an integral representation of the "multinomial" part.
The sequences $t_0(k,n)=|t(2n,2k)|$ (\href{https://oeis.org/A008955}{OEIS A008955})
and $t_1(k,n)=|t(2n+1,2k+1)|$ (\href{https://oeis.org/A008956}{OEIS A008956}) used
in this paper are obtained from the recurrences
$$
t_0(k,n) = t_0(k-1,n-1)+(n-1)^2 t_0(k,n-1),\qquad
t_1(k,n) = t_1(k-1,n-1)+\left(n-\frac{1}{2}\right)^2 t_1(k,n-1)
$$
with diagonal and lower part given by
$$
t_0(n,n)=t_1(n,n) = 1, \mbox{ for all } n\in\N, \qquad t_0(k,n)=t_1(k,n)=0, \mbox{ if } k<n,$$
and row of index zero given by
$$
t_0(0,n) = 0, \qquad t_1(0,n)=\frac{(2n+1)!!^2}{4^n},\qquad \mbox{ if } n>0.
$$
The first cfns for $0\leq k\leq5$ and $0\leq n\leq5$ are given in table \ref{tab:cfn}.

\begin{table}[htb]
\centering
$$
\displaystyle
\begin{array}{l|ccccccc}
k\backslash n & 0& 1 & 2 & 3 & 4 & 5 & \dots\\
\hline
0& 1& 0 & 0 & 0 & 0 & 0 &\dots \\
1& 0&1 & 1 & 4 & 36 & 576& \dots \\
2& 0&0 & 1 & 5 & 49 & 820& \dots \\
3& 0&0 & 0 & 1 & 14 & 273& \dots \\
4& 0&0 & 0 & 0 & 1 & 30& \dots \\
5& 0&0 & 0 & 0 & 0 & 1& \dots \\
\vdots &\vdots &\vdots&\vdots&\vdots&\vdots&\vdots&\ddots
\end{array}
\qquad\displaystyle
\begin{array}{l|ccccccc}
k\backslash n & 0& 1           & 2            & 3              & 4 & 5 & \dots\\
\hline
0             & 1 & \frac{1}{4} & \frac{9}{16} & \frac{225}{64} & \frac{11025}{256} & \frac{893025}{1024}& \dots \\
1 & 0 & 1 & \frac{5}{2} & \frac{259}{16} & \frac{3229}{16} & \frac{1057221}{256}& \dots \\
2 & 0 & 0 & 1 & \frac{35}{4} & \frac{987}{8} & \frac{86405}{32}& \dots \\
3 & 0 & 0 & 0 & 1 & 21 & \frac{4389}{8}& \dots \\
4 & 0 & 0 & 0 & 0 & 1 & \frac{165}{4}& \dots \\
5 & 0 & 0 & 0 & 0 & 0 & 1& \dots \\
\vdots &\vdots &\vdots&\vdots&\vdots&\vdots&\vdots&\ddots
\end{array}
$$
\caption{Arrays of the central factorial number. Left: the numbers $t_0$. Right: the numbers $t_1$}
\label{tab:cfn}
\end{table}
These sequences are related to the central factorial numbers (cfn)
$t(2n,2k)$ and $t(2n+1,2k+1)$ presented and studied in (Butzer and al. \cite{Butzer1989} ).
Note that, with the convention used in this paper, cfns $t_0$ and $t_1$ are transposed
and unsigned with respect to the sequences studied in the cited article. Using the previous
reference, we have also that the cfns $|t(n,m)|$ have the following generating function
(\cite{Butzer1989}, Theorem 4.1.2) for $|z|<2$ and any $m\in\N$
\begin{equation}\label{eq:gen-funct-cfn}
\frac{1}{m!} \left[2\;\mathrm{arcsin}\left(\frac{z}{2}\right)\right]^{m}
=\sum_{n=m}^\infty  |t(n,m)|  \frac{z^{n}}{n!}.
\end{equation}
In particular, if $m=2k$ and $m=2k+1$, we have
\begin{eqnarray}
\frac{1}{(2k)!} \left[2\;\mathrm{arcsin}\left(\frac{z}{2}\right)\right]^{2k}
&=&\sum_{n=k}^\infty  t_0(k,n)  \frac{z^{2n}}{(2n)!}, \quad k\geq 1\label{eq:gen-funct-cfn0}\\
\frac{1}{(2k+1)!} \left[2\;\mathrm{arcsin}\left(\frac{z}{2}\right)\right]^{2k+1}
&=&\sum_{n=k}^\infty  t_1(k,n)  \frac{z^{2n+1}}{(2n+1)!}, \quad k\geq0. \label{eq:gen-funct-cfn1}
\end{eqnarray}

Finally, we define the following recursive harmonic numbers of order two as
$\Heve{k}{n}$ and $\Hodd{k}{n}$ by the initial conditions
$\Heve{0}{0}=1$, $\Heve{0}{n}=0$ if $n\geq 1$,
$\Heve{1}{n} = \Hodd{0}{n}=1$ for all  $n>0$,
and, if $k>1$, the recurrences
\begin{eqnarray*}
\Heve{k}{n} &=& \sum_{i_k=k-1}^{n-1} \frac{1}{i_k^2} \Heve{k-1}{i_k}
= \sum_{i_k=k-1}^{n-1} \frac{1}{i_k^2}
\left(\sum_{i_{k-1}=k-2}^{i_k-1} \frac{1}{i_{k-1}^2} \ldots
\sum_{i_1=1}^{i_2-1} \frac{1}{i_1^2} \right)\\
\Hodd{k}{n} &= &\sum_{i_k=k-1}^{n-1} \frac{1}{(2i_k+1)^2} \Hodd{k-1}{i_k}
= \sum_{i_k=k-1}^{n-1} \frac{1}{(2i_k+1)^2}
\left(\sum_{i_{k-1}=k-2}^{i_k-1} \frac{1}{(2i_{k-1}+1)^2} \ldots
\sum_{i_1=0}^{i_2-1} \frac{1}{(2i_1+1)^2} \right).
\end{eqnarray*}
The first numbers for $0\leq k\leq3$ and $0\leq n\leq5$ are given in table \ref{tab:Hkn}.
\begin{table}[htb]
\centering
$$
\displaystyle
\begin{array}{l|ccccccc}
k\backslash n & 0& 1           & 2            & 3              & 4 & 5 & \dots\\
\hline
0&1 & 0 & 0 & 0 & 0 & 0&\dots\\
1&0 & 1 & 1 & 1 & 1 & 1&\dots\\
2&0 & 0 & 1 & \frac{5}{4} & \frac{49}{36} & \frac{205}{144}&\dots\\
3&0 & 0 & 0 & \frac{1}{4} & \frac{7}{18} & \frac{91}{192}&\dots\\
\vdots &\vdots &\vdots&\vdots&\vdots&\vdots&\vdots&\ddots
\end{array}
\qquad\displaystyle
\begin{array}{l|ccccccc}
k\backslash n & 0 & 1           & 2            & 3              & 4 & 5 & \dots\\
\hline
0&1 & 1 & 1 & 1 & 1 & 1&\dots\\
1&0 & 1 & \frac{10}{9} & \frac{259}{225} & \frac{12916}{11025} & \frac{117469}{99225}&\dots\\
2&0 & 0 & \frac{1}{9} & \frac{7}{45} & \frac{94}{525} & \frac{34562}{178605}&\dots\\
3&0 & 0 & 0 & \frac{1}{225} & \frac{4}{525} & \frac{418}{42525}&\dots\\
\vdots &\vdots &\vdots&\vdots&\vdots&\vdots&\vdots&\ddots
\end{array}
$$
\caption{Arrays of the recursive harmonic numbers of order two. Left: the numbers $H_0$.
Right: the numbers $H_1$}
\label{tab:Hkn}
\end{table}
Finally, for all $k\in \N$ (\cite{Butzer1989}, p. 430)
$$
t_0({k,n}) = (n-1)!^2 \Heve{k}{n},\ n\geq1,
\qquad \mbox{ and }\qquad
t_1({k,n}) = 2^{2k} \binom{2n}{n}\frac{(2n)!}{2^{4n}}\Hodd{k}{n},\ n\geq 0.
$$

The next theorem gives an integral representation of the coefficients
$\Hodd{k}{j}$ and $\Heve{k}{j}$.
\begin{theorem}\label{th:Hkj-integral}
Let $k\geq0$, for any $j \geq 0$, the coefficients $\Heve{k+1}{j}$ and
$\Hodd{k}{j}$ can be expressed as the sum of integrals of the form
\begin{eqnarray}
\Hodd{k}{j} &=& \sum_{l=0}^k \frac{1}{(2l)!} \left(\frac{\pi}{2}\right)^{2l}
\int_{C_{k-l}}
\prod_{i=l+1}^k\frac{   x_{i}^{2j} \log(x_i) dx_i}
     {\left(1- \prod_{m=l+1}^i x_{m}^2\right) }\label{eq:Tood_integral} \\
\Heve{k+1}{j} &=& \sum_{l=0}^k \frac{\pi^{2l}}{(2l+1)!}
\int_{C_{k-l}}
 \prod_{i=l+1}^k \frac{  x_{i}^{j-1} \log(x_i) dx_i}
     { \left(1- \prod_{m=l+1}^i x_{m}\right) }   \label{th:teve-integral} 
\end{eqnarray}
for $l=0,\ldots, k$, with $C_l$ denoting the unit cube in $\R^l$ and with
the convention that the integrals in the right-hand sides are equal to 1 when $l=k$.
\end{theorem}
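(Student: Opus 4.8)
The plan is to recognise both sides of each identity as symmetric functions of the sequences $c_i\eqdef 1/(2i+1)^2$ (odd case) and $d_i\eqdef 1/i^2$ (even case), and to match them through a single generating-function identity. I carry out the odd case in detail; the even case is word for word the same with $c_i$ replaced by $d_i$ and the exponents $2j$ replaced by $j-1$ (which forces the harmless restriction $j\ge1$, the case $j=0$, $k\ge1$ being trivial since both sides then vanish).

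First I would unfold the recursion defining $\Hodd{k}{\cdot}$ to obtain
$$
\Hodd{k}{j}\;=\;\sum_{0\le i_1<i_2<\cdots<i_k\le j-1}\ \prod_{m=1}^k\frac{1}{(2i_m+1)^2}\;=\;e_k(c_0,c_1,\ldots,c_{j-1}),
$$
the elementary symmetric polynomial of degree $k$ in the first $j$ terms of the sequence (it vanishes for $k>j$, matching $\Hodd{k}{j}=0$ in that range); likewise $\Heve{k+1}{j}=e_k(d_1,\ldots,d_{j-1})$. Next I would observe that the coefficient $\tfrac{1}{(2l)!}\bigl(\tfrac{\pi}{2}\bigr)^{2l}$ occurring in (\ref{eq:Tood_integral}) is, after reindexing (\ref{def:A1}), nothing but $A_1(l)=e_l(c_0,c_1,c_2,\ldots)$, the elementary symmetric polynomial of the entire (summable) sequence; its closed form is exactly the one proved in Section \ref{subsec:series-A0-A1} (equivalently it can be read off $\cos z=\prod_{i\ge0}\bigl(1-\tfrac{4z^2}{(2i+1)^2\pi^2}\bigr)$). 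Similarly $\tfrac{\pi^{2l}}{(2l+1)!}=A_0(l)=e_l(d_1,d_2,\ldots)$ via $\tfrac{\sin z}{z}=\prod_{i\ge1}\bigl(1-\tfrac{z^2}{i^2\pi^2}\bigr)$.

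The heart of the argument is then the evaluation, for fixed $l$, of the multiple integral in (\ref{eq:Tood_integral}). Expanding each denominator as a geometric series, $\tfrac{1}{1-\prod_{m=l+1}^ix_m^2}=\sum_{n_i\ge0}\prod_{m=l+1}^ix_m^{2n_i}$, the variable $x_p$ occurs with exponent $2j+2\sum_{i=p}^kn_i$ and a single factor $\log x_p$; integrating in each $x_p$ via (\ref{eq:Integral-power-Log}) with $l=1$ contributes $-\bigl(2j+2\sum_{i=p}^kn_i+1\bigr)^{-2}$. Setting $S_p\eqdef\sum_{i=p}^kn_i$, the substitution $(n_{l+1},\ldots,n_k)\mapsto(S_{l+1},\ldots,S_k)$ is a bijection onto $\{0\le S_k\le\cdots\le S_{l+1}\}$, and writing $j_p\eqdef j+S_p$ the resulting sum becomes $\sum_{j\le j_k\le\cdots\le j_{l+1}}\prod_{p=l+1}^k(2j_p+1)^{-2}$, so that
$$
\int_{C_{k-l}}\prod_{i=l+1}^k\frac{x_i^{2j}\log(x_i)\,dx_i}{1-\prod_{m=l+1}^ix_m^2}\;=\;(-1)^{k-l}\,h_{k-l}(c_j,c_{j+1},\ldots),
$$
where $h_m$ denotes the complete homogeneous symmetric polynomial of degree $m$; the term-by-term integration is legitimate by Tonelli's theorem, since the series of absolute values sums to $h_{k-l}(c_j,c_{j+1},\ldots)\le h_{k-l}(c_0,c_1,\ldots)<\infty$ (recall $\sum_i c_i=\pi^2/8$). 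The integral (\ref{th:teve-integral}) is handled identically and produces $(-1)^{k-l}h_{k-l}(d_j,d_{j+1},\ldots)$.

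Assembling the pieces, the right-hand side of (\ref{eq:Tood_integral}) equals $\sum_{l=0}^k(-1)^{k-l}e_l(c_0,c_1,\ldots)\,h_{k-l}(c_j,c_{j+1},\ldots)$, so the theorem reduces to the classical symmetric-function identity
$$
e_k(a_0,\ldots,a_{j-1})\;=\;\sum_{l=0}^k(-1)^{k-l}\,e_l(a_0,a_1,\ldots)\,h_{k-l}(a_j,a_{j+1},\ldots),
$$
valid for any summable nonnegative sequence $(a_i)_{i\ge0}$, which I would obtain by comparing the coefficient of $t^k$ in the factorisation
$$
\prod_{i=0}^{j-1}(1+a_it)=\Bigl(\prod_{i\ge0}(1+a_it)\Bigr)\Bigl(\prod_{i\ge j}(1+a_it)\Bigr)^{-1},\qquad\Bigl(\prod_{i\ge j}(1+a_it)\Bigr)^{-1}=\sum_{m\ge0}h_m(a_j,a_{j+1},\ldots)(-t)^m.
$$
Taking $a_i=c_i$ gives (\ref{eq:Tood_integral}) and $a_i=d_i$ gives (\ref{th:teve-integral}); the terms $l=k$ reduce to the stated convention $\int_{C_0}=1$, and $k=0$ returns $\Hodd{0}{j}=\Heve{1}{j}=1$. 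I expect the only genuine obstacle to be the bookkeeping in the integral computation — verifying that the change of variables $n_p\mapsto S_p$ reproduces precisely the nested inequalities $j\le j_k\le\cdots\le j_{l+1}$ that define $h_{k-l}$ of the tail of the sequence — everything else being routine applications of (\ref{eq:Integral-power-Log}), Tonelli's theorem, and the generating-function identity above.
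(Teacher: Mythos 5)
Your proof is correct, and it takes a genuinely different route from the paper's. The paper works forward from the left-hand side: it turns $\Hodd{k}{j}$ into a $k$-fold integral via (\ref{eq:Integral-power-Log}), evaluates the nested finite geometric sums to decompose the integrand as $\sum_l r_l^{(k)}(x)\,x_{l+1}^{2j}\cdots x_k^{2j}$ with explicit rational functions $r_l^{(k)}$ pinned down by a recurrence and an induction (equation (\ref{eq:Tkj_integral-1})), and then factors each piece into $A_1(l)$ (via Lemma \ref{lemma:Ak-evaluation} and the Corollary of Section \ref{subsec:series-A0-A1}) times the integral in the statement. You instead evaluate the right-hand side: geometric expansion plus (\ref{eq:Integral-power-Log}) turns the multiple integral into $(-1)^{k-l}h_{k-l}(c_j,c_{j+1},\ldots)$, the coefficient is recognized as $e_l(c_0,c_1,\ldots)=A_1(l)$, and the whole identity collapses to $E_{\mathrm{head}}(t)=E_{\mathrm{full}}(t)\,H_{\mathrm{tail}}(-t)$ for $c_i=1/(2i+1)^2$ (resp.\ $d_i=1/i^2$). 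Your computation of the integral and the bijection $n_p\mapsto S_p$ are both correct, as is the identification $\Hodd{k}{j}=e_k(c_0,\ldots,c_{j-1})$. What this buys: you avoid the $r_l^{(k)}$ bookkeeping and its induction entirely, the symmetric-function structure (already implicit in the paper's use of the cycle-index formula in Section \ref{subsec:series-S0-S1}) is made explicit, and the cosine/sine product formulas make the argument independent of the Corollary if desired. What the paper's route buys is the intermediate integral representation (\ref{eq:Tkj_integral-1}), which it reuses implicitly in the convention handling for $j<k$.

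One small inaccuracy, confined to a degenerate boundary case: for the even identity at $j=0$ you claim that "both sides vanish" when $k\ge1$. In fact the left side is $0$ but the right side is not well defined --- the $l=k$ term contributes $\pi^{2k}/(2k+1)!\neq 0$ and the $l<k$ integrals diverge because of the factor $x_i^{-1}\log(x_i)$. This is really a defect of the theorem's "for any $j\ge0$" (already at $k=0$, $j=0$ the stated right-hand side equals $1$ while Table \ref{tab:Hkn} gives $\Heve{1}{0}=0$), not of your argument; your restriction to $j\ge1$ in the even case is exactly right, so simply drop the parenthetical justification rather than repair it.
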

\begin{proof}
We first show that for $j \geq k$, the coefficients $\Hodd{k}{j}$ can be expressed as integral of the form
\begin{equation}\label{eq:Tkj_integral-1}
\Hodd{k}{j} = (-1)^k \int_0^1\ldots \int_0^1
\sum_{l=0}^k r_l^{(k)}(x_1,\ldots x_k) x_{l+1}^{2j}\ldots x_k^{2j}\,
\log(x_1)\ldots\log(x_k)\, dx_1\ldots dx_k
\end{equation}
with
\begin{equation}
\label{eq:rkk}
r_l^{(k)}(x_1,\ldots x_k) = \frac{ (-1)^{k+l} \prod_{i=1}^{l} x_i^{2(i-1)} }
{\prod_{i=1}^{l} \left(1-\prod_{m=i}^l x_m^2\right) \prod_{i=l+1}^{k} \left(1- \prod_{m=l+1}^i x_{m}^2\right) },
\end{equation}
independent of $j$, for $l=0,\ldots, k$. 
Using repeatedly the identity (\ref{eq:Integral-power-Log}) with $l=1$, we obtain
\begin{equation*}
\Hodd{k}{j} = (-1)^{k} \int_0^1\ldots \int_{0}^1 \left( \sum_{i_k=k-1}^{j-1} x_k^2 \left(\sum_{i_{k-1}=k-2}^{i_k-1} x_{k-1}^2 \ldots \sum_{i_1=0}^{i_2-1} x_1^2 \right) \right) \prod_{j=1}^{k} \log(x_j)dx_j.
\end{equation*}
If $|z|<1$, then
$$
\sum_{i_l = l-1}^{i_{l+1}-1} z^{2i_l} = \frac{z^{2l-2}-z^{2i_{l+1}}}{1-z^2}, \qquad l=1,\ldots,k.
$$
Using this identity, we obtain that the sum inside the integral
(replacing  $j$ by $i_{k+1}$) has the expression
$$
\left(\sum_{i_{k}=k-1}^{i_{k+1}-1} x_{k}^2 \ldots \sum_{i_1=0}^{i_2-1} x_1^2 \right)
= \sum_{l=0}^{k} r_l^{(k)}(x_1,\ldots,x_k) \prod_{m=l+1}^{k} x_m^{2i_{k+1}} 
$$
and that $r_l^{(k)}(x_1,\ldots,x_k)$ obey the following recurrence relations in $k$
\begin{eqnarray*}
r_l^{(k)}(x_1,\ldots,x_k) & = &  r_l^{(k-1)}(x_1,\ldots,x_{k-1}) \frac{-1}{1-(x_{l+1}\ldots x_k)^2}, \quad\mbox{if } 0\leq l<k,\\
r_k^{(k)}(x_1,\ldots,x_k) & = & \sum_{j=0}^{k-1} r_j^{(k-1)}(x_1,\ldots,x_{k-1}) \frac{(x_{j+1}\ldots x_k)^{2k} }{1-(x_{j}\ldots x_k)^2 },
\quad \mbox{otherwise}.
\end{eqnarray*}
with initial value, when $k=1$,
$$r^{(1)}_0(x_1)= \frac{-1}{1-x_1^2}, \qquad r^{(1)}_1(x_1) = \frac{x_1^{0}}{1-x_1^2}. $$
This show that if $0\leq l<k$, we have
$$
r_l^{(k)}(x_1,\ldots,x_k) =  r_l^{(l)}(x_1,\ldots,x_l) \frac{(-1)^{k-l} }
{ (1-x_{l+1}^2) \ldots (1-(x_{l+1}\ldots x_k)^2)}
$$
It remains to verify that $r_l^{(l)}(x_1,\ldots,x_l)$ is given by
\begin{equation}\label{eq:rll}
r_l^{(l)}(x_1,\ldots,x_l) = \frac{ x_1^{0}x_2^2\ldots x_l^{2l-2} } {(1-x_l^2)\ldots(1-x_1^2\ldots x_l^2) }    
\end{equation}
for $1\leq l \leq k$. This can be done by recurrence.
Let us assume that equation (\ref{eq:rll}) is fulfilled for $j=1,\ldots,k-1$.
Then $r_k^{(k)}$ verify the equation
$$
r_{k}^{(k)}(x_1,\ldots,x_k) = \sum_{j=0}^{k-1}(-1)^{k-j} \frac{ r_j^{(j)}(x_1,\ldots,x_j) }
{ (1-x_{j+1}^2) \ldots (1-(x_{j+1}\ldots x_k)^2)},
$$
using the recurrence assumption and summing term by term, we obtain the claim.

Thus, the expression given in equation (\ref{eq:Tkj_integral-1}) can be written
$$
\Hodd{k}{j} = \sum_{l=0}^k 
\int_{C_l} \prod_{i=1}^{l} (-1)^l \frac{ x_i^{2(i-1)}  \log(x_i) dx_i }
                                       { \left(1-\prod_{m=i}^l x_m^2\right) }
\times
\int_{C_{k-l}} \prod_{i=l+1}^k \frac{  x_{i}^{2j} \log(x_i) dx_i }
                                    {  \left(1- \prod_{m=l+1}^i x_{m}^2\right) }
$$
with the convention that when $l=0$,
the integral is equal to one. The first integral does not depend on $j$ and is
equal to $A_1(l)$ thanks to the lemma \ref{lemma:Ak-evaluation}.

Finally, we observe that the polynomials
$r_l^{(k)}(x_1,\ldots x_k) x_{l+1}^{2j}\ldots x_k^{2j}$ are equal to zero if $j<k$,
thus the enunciated result is true for any $j\geq 0$. This ends the claim for $H_1$
and the proof of the first part of the theorem.

The proof for $H_0(k,j)$ is similar (just replace $x_i^2$ by $x_i$) and is omitted.
\end{proof}

\section{Proof of the main results}

Making the change of variable $v=2\sin(\theta/2)$,
$\partial v = \cos(\theta/2) \partial\theta$ in the integral (\ref{eq:moment-int-cot}),
we obtain
$$
C(m) = \int_0^2 \frac{1}{m!} \left[2\arcsin\left(\frac{v}{2}\right)\right]^{m} \frac{dv}{v}.
$$
We recognize in this integral the generating function of the (unsigned) central factorial numbers
(equation \ref{eq:gen-funct-cfn}). Integrating term by term, we get the following representation
of the moment of the $\cot$ function in terms of the cfns
\begin{equation}\label{eq:moment-series-cot}
C(m) = \sum_{n=m}^{\infty} |t(n,m)| \frac{2^n}{n\ n!}
\end{equation}

We get
after simplification the following series representations for the odd and
even moments of the $\cot$ function
\begin{eqnarray*}
\Codd{k} &=& 2^{2k+1}
\sum_{j=0}^\infty \binom{2j}{j}   \frac{1}{4^{j}} \frac{\Hodd{k}{j}}{(2j+1)^2} \\
\Ceve{k} &=&  \frac{1}{2} \sum_{j=1}^\infty \frac{2^{2j}}{j^3\binom{2j}{j}} \Heve{k}{j}.
\end{eqnarray*}
We replace $H_1(k,j)$ and $H_0(k,j)$ in the previous expressions
using Theorem \ref{th:Hkj-integral}. Inverting sum and integral, we arrive to
the identities
\begin{eqnarray*}
\Codd{k} &=& 2^{2k+1} \sum_{l=0}^k \frac{1}{(2l)!}\left(\frac{\pi}{2}\right)^{2l}
\int_{C_{k-l}}  K_1(x_1\ldots x_{k-l})
\prod_{i=1}^{k-l} \frac{\log(x_i)dx_i}{1-\prod_{m=1}^i x_m^2}\\
\Ceve{k} &=& \sum_{l=0}^k \frac{\pi^{2l}}{(2l+1)!}
\int_{C_{k-l}}  K_0(x_1 \ldots x_{k-l})
\prod_{i=1}^{k-l} \frac{\log(x_i)dx_i}{1-\prod_{m=1}^i x_m }.
\end{eqnarray*}
Summing in a reversed order gives the result.

\section{Numerical validations}

All presented results have been thoroughly checked using the Python
symbolic calculus library \verb|sympy| (\cite{SymPy-lib}) and/or the
Python numerical calculus library \verb|scipy| (\cite{SciPy-lib}).
The code (in the form of Jupyter notebooks) is available on demand.

\bibliographystyle{plain}
\bibliography{IntegrateCot}

\end{document}